\theoremstyle{plain}
\newtheorem{thm}{Theorem}[section]
\newtheorem{lem}[thm]{Lemma}
\newtheorem{prop}[thm]{Proposition}
\theoremstyle{definition}
\theoremstyle{remark}
\newtheorem{rmk}[thm]{Remark}
\def\A{{\mathbf A}}
\def\cD{\mathcal{D}}
\def\cH{\mathcal{H}}
\def\cI{\mathcal{I}}
\def\cO{\mathcal{O}}
\def\.{\cdot}
\def\^{\widehat}
\def\({\left(}
\def\){\right)}
\renewcommand{\and}{ \ \ \text{ and } \ \ }
\begin{document}

\author[Q.~Chen]{Qianyu Chen}

\address{Department of Mathematics, University of Michigan, 530 Church Street, Ann Arbor, MI 48109, USA}

\email{qyc@umich.edu}

\author[B.~Dirks]{Bradley Dirks}

\address{Department of Mathematics, Stony Brook University, Stony Brook, NY 11794, USA}

\email{bradley.dirks@stonybrook.edu}

\author[M.~Musta\c{t}\u{a}]{Mircea Musta\c{t}\u{a}}

\address{Department of Mathematics, University of Michigan, 530 Church Street, Ann Arbor, MI 48109, USA}

\email{mmustata@umich.edu}

\thanks{Q.C. was partially supported by NSF grant DMS-1952399, B.D. was partly supported by NSF grant DMS-2001132 and NSF-MSPRF grant DMS-2303070,  and M.M. was partially supported by NSF grants DMS-2301463 and DMS-1952399.}

\subjclass[2020]{14F10, 14B05, 32S25}

\title[The minimal exponent of cones]{The minimal exponent of cones over smooth complete intersection projective varieties}

\dedicatory{In memory of Lucian B\u{a}descu}

\baselineskip 16pt \footskip = 32pt

\begin{abstract}
We compute the minimal exponent of the affine cone over a complete intersection of smooth projective hypersurfaces intersecting transversely. The upper bound 
for the minimal exponent is proved, more generally, in the weighted homogeneous setting, while the lower bound is deduced from a general lower bound in terms 
of a strong factorizing resolution in the sense of Bravo and Villamayor.
\end{abstract}

\maketitle

\section{Introduction}

Let $X$ be a smooth complex algebraic variety. If $Z$ is a nonempty hypersurface in $X$, then the \emph{minimal exponent} $\widetilde{\alpha}(Z)$ 
was defined by Saito in \cite{Saito-B} using the Bernstein-Sato polynomial of a local equation of $Z$, as follows. Recall that if $Z$ is defined
in an open subset $U$ of $X$ by $f\in\cO_X(U)$, then the Bernstein-Sato polynomial of $f$ is the monic polynomial $b_f(s)\in {\mathbf C}[s]$
of minimal degree such that 
$$b_f(s)f^s\in {\mathcal D}_U[s]\cdot f^{s+1}.$$
Here $f^s$ is a formal symbol on which the sheaf $\cD_U$ of differential operators on $U$ acts in the expected way. 
By a result of Kashiwara \cite{Kashiwara}, all roots of $b_f$ are negative rational numbers.
It is easy to see, by specializing
$s$ to $-1$, that if $Z\vert_U:=Z\cap U$ is nonempty, then $b_f(-1)=0$. By definition, $\widetilde{\alpha}(Z\vert_U)=\widetilde{\alpha}(f)$ is the negative of the largest root of $b_f(s)/(s+1)$
(with the convention that this is $\infty$ if $b_f(s)=s+1$). In order to define $\widetilde{\alpha}(Z)$, one takes an open cover $X=\bigcup_iU_i$
and
$\widetilde{\alpha}(Z)=\min_i\widetilde{\alpha}(Z\vert_{U_i})$, where the minimum is over those $i$ such that $Z\vert_{U_i}$ is nonempty. 

The minimal exponent of a hypersurface is an interesting invariant. 
A result due to Lichtin and Koll\'{a}r \cite{Kollar} says that the minimal exponent refines an important invariant of singularities in birational geometry, the 
\emph{log canonical threshold} ${\rm lct}(X,Z)$; more precisely, we have
$${\rm lct}(X,Z)=\min\big\{\widetilde{\alpha}(Z),1\big\}.$$
It was shown by Saito \cite{Saito-B} that $\widetilde{\alpha}(Z)>1$ if and only if $Z$ has rational singularities. Moreover, we have $\widetilde{\alpha}(Z)=\infty$
if and only if $Z$ is smooth.
Recently, it was shown that the minimal exponent characterizes the \emph{higher Du Bois} property of the singularities of $Z$ (see \cite{MOPW} and \cite{Saito_et_al})
and the condition for \emph{higher rational singularities} (see \cite{FL} and \cite{MP2}).

If $Z$ has isolated singularities, then the minimal exponent can be described via asymptotic expansions of integrals along vanishing cycles,
see \cite{Malgrange} and \cite{Malgrange2}. In this incarnation,  it has been extensively studied in
\cite{AGZV} and is also known as the \emph{Arnold exponent} of $f$.

In \cite{CDMO}, the authors of the present article and Sebasti\'{a}n Olano introduced and studied an extension of the minimal exponent $\widetilde{\alpha}(Z)$ to the case when
$Z$ is a complete intersection in $X$ of pure codimension $r$, for any $r\geq 1$. The definition was in terms of the Kashiwara-Malgrange filtration associated
to $Z$ (the corresponding description in the hypersurface case is a result due to Saito \cite{Saito-MLCT}). One of the main results in \cite{CDMO} 
gave a description in terms of the minimal exponent of a hypersurface, as follows. Suppose  that $Z$ is defined in $X$ by $f_1,\ldots,f_r\in\cO_X(X)$ and 
$g=\sum_{j=1}^rf_jy_j\in \cO_Y(Y)$, where $Y=X\times {\mathbf A}^r$, with $y_1,\ldots,y_r$ being the coordinates on ${\mathbf A}^r$. If $W=X\times \big({\mathbf A}^r\smallsetminus\{0\}\big)$,
then $\widetilde{\alpha}(Z)=\widetilde{\alpha}(g\vert_W)$. This description allows deducing the main properties of the minimal exponent of local complete intersections from the corresponding properties of the invariant in the case of hypersurfaces. Results on the $V$-filtration from \cite{BMS} allowed us to relate again the minimal exponent to the log canonical threshold and to rational singularities:
we have
$${\rm lct}(X,Z)=\min\big\{\widetilde{\alpha}(Z),r\big\}$$
and $\widetilde{\alpha}(Z)>r$ if and only if $Z$ has rational singularities. 
It was also shown in \cite{CDMO} that one can use the minimal exponent to detect how far the Hodge filtration on the local cohomology
$\cH^r_Z(\cO_X)$ agrees with the pole order filtration, extending the corresponding result for hypersurfaces from \cite{Saito-MLCT} and \cite{MP}. In conjunction with results from \cite{MP3}, this implied that the minimal exponent detects the
higher Du Bois property of local complete intersections. The fact that it also detects higher rational singularities in this setting was subsequently shown in \cite{CDM}. Finally, the minimal exponent can be described
in terms of the Bernstein-Sato polynomial $b_{\mathbf f}(s)$, associated to ${\mathbf f}=(f_1,\ldots,f_r)$, that was introduced in \cite{BMS}: in this case we have $b_{\mathbf f}(-r)=0$ and it was shown in \cite{Dirks} that 
$\widetilde{\alpha}(Z)$ is the negative of the largest root of 
$b_{\mathbf f}(s)/(s+r)$. 

While many of the basic properties of the minimal exponent are by now understood in the local complete intersection case, there are few known explicit examples beyond
codimension~$1$. One example given in \cite{CDMO} is that of a complete intersection in ${\mathbf A}^n$, with an isolated singularity at $0$, defined by homogeneous equations of the same degree $d$;
in this case we have $\widetilde{\alpha}(Z)=\tfrac{n}{d}$, extending a well-known formula for hypersurfaces. Our main result in this note is the following extension to the case when the homogeneous 
equations defining $Z$ have possibly different degrees:

\begin{thm}\label{thm_example}
Let $f_1,\ldots,f_r\in {\mathbf C}[x_1,\ldots,x_n]$ be homogeneous polyomials that form a regular sequence, with ${\rm deg}(f_i)=d_i$ for $1\leq i\leq r$, and such that $2\leq d_1\leq\ldots\leq d_r$.
For every $i$, we denote by $H_i$ the hypersurface defined by $f_i$ in ${\mathbf A}^n$ and by $Z$ the intersection $H_1\cap\ldots\cap H_r$. 
If on ${\mathbf A}^n\smallsetminus \{0\}$ each $H_i$ is smooth and $\sum_{i=1}^rH_i$ has simple normal crossings, 
 then
 \begin{equation}\label{eq_thm_example}
\widetilde{\alpha}(Z) = \min\big\{i+\tfrac{1}{d_i}(n-d_1-\ldots-d_i)\mid 1\leq i\leq r\big\}=p+\tfrac{1}{d_p}(n-d_1-\ldots-d_p),
\end{equation}
where $p$ is the smallest $i\leq r$ that satisfies $d_1+\ldots+d_i>n$ (with the convention that $p=r$ if there is no such $i$).
\end{thm}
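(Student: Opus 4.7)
The plan is to prove matching upper and lower bounds on $\widetilde\alpha(Z)$, both based on the identification
\[
\widetilde\alpha(Z) = \widetilde\alpha(g\vert_W), \quad g = \sum_{j=1}^{r} f_j y_j, \quad W = \A^n \times (\A^r \setminus \{0\}),
\]
from \cite{CDMO}, which reduces the problem on a codimension-$r$ complete intersection to a question about a (non-isolated) hypersurface singularity in an auxiliary space.

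For the upper bound, I would fix $1 \leq p \leq r$ and establish $\widetilde\alpha(Z) \leq p + (n - d_1 - \cdots - d_p)/d_p$. The idea is to localize $g$ at the point $P_p := (0, e_p) \in W$, where $e_p$ is the $p$-th standard basis vector of $\A^r$. On the chart where $y_p \neq 0$, introducing $u_j := y_j/y_p$ for $j \neq p$ factors the polynomial as $g = y_p \cdot G$, with
\[
G = f_p + \sum_{j \neq p} u_j f_j,
\]
so $\widetilde\alpha(g, P_p) = \widetilde\alpha(G, 0)$. Here $G$ is quasihomogeneous of degree $d_p$ with weights $1$ on each $x_i$ and $d_p - d_j$ on each $u_j$; these weights are nonnegative for $j < p$ and nonpositive for $j > p$. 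I would then invoke the general weighted homogeneous upper bound for the minimal exponent (to be proven elsewhere in the paper, in a form that permits non-isolated singular loci and degenerate weights) applied to the subpolynomial $G_p := f_p + \sum_{j<p} u_j f_j$ of strictly weighted homogeneous type, to conclude
\[
\widetilde\alpha(G, 0) \;\leq\; \frac{n + \sum_{j<p}(d_p - d_j)}{d_p} \;=\; p + \frac{n - d_1 - \cdots - d_p}{d_p},
\]
and finally take the minimum over $p$.

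For the lower bound, I would construct a strong factorizing resolution of $Z$ in the sense of Bravo--Villamayor, starting from the blow-up $\pi_0 : Y_0 \to \A^n$ of the origin. The SNC assumption on $\sum_i H_i$ outside $0$ guarantees that the strict transforms $\widetilde H_i$ are smooth, meet the exceptional divisor $E \cong \P^{n-1}$ and each other transversely, and cut out a smooth complete intersection $\widetilde Z = \bigcap_i \widetilde H_i$. When all the $d_i$ are equal to a common $d$, one already has $\pi_0^*\cI_Z = \cO_{Y_0}(-dE)\cdot \cI_{\widetilde Z}$, a strong factorizing resolution; in general, a few further blow-ups at smooth centers above $E$ are needed to separate the contributions of the different $d_i$'s. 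Feeding the numerical data (the multiplicities of the $f_i$ along the exceptional divisors produced by the resolution) into the general lower bound for $\widetilde\alpha$ in terms of a strong factorizing resolution (also proven earlier in the paper) will give the matching inequality $\widetilde\alpha(Z) \geq p + (n - d_1 - \cdots - d_p)/d_p$ for the critical $p$.

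The main obstacle, as I see it, is the upper bound: the auxiliary hypersurface $g = 0$ has non-isolated singularities along $\{0\} \times (\A^r \setminus \{0\})$, so the classical Arnold--Steenbrink formula for isolated weighted homogeneous singularities does not apply directly. The crux is the general weighted homogeneous upper bound permitting non-isolated loci and degenerate (including nonpositive) weights, together with the bookkeeping of the local factorization $g = y_p G$ at each $P_p$ and the transfer of the bound from $G_p$ to $G$. The lower bound is, by contrast, a comparatively mechanical consequence of the general strong factorizing resolution estimate, once one observes that the blow-up of the origin (possibly followed by additional blow-ups) yields such a resolution in our setting.
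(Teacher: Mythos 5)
Your overall strategy---reducing to the hypersurface $g=\sum_j f_j y_j$, proving the upper bound via a weighted-homogeneous estimate on the chart $\{y_p\neq 0\}$, and proving the lower bound via a strong factorizing resolution built from blow-ups over the origin---is the same as the paper's. However, there are two genuine gaps.

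\emph{Upper bound.} You propose to apply the weighted-homogeneous estimate to the subpolynomial $G_p = f_p+\sum_{j<p}u_jf_j$ and then ``transfer the bound from $G_p$ to $G$.'' That transfer goes the wrong way: $G_p$ is the restriction of $G$ to $\{u_{p+1}=\cdots=u_r=0\}$, and restriction to a smooth subvariety can only \emph{decrease} the minimal exponent, so the available inequality is $\widetilde\alpha_0(G_p)\le\widetilde\alpha_0(G)$, which does not yield an upper bound for $\widetilde\alpha_0(G)$. A ``general weighted homogeneous bound permitting nonpositive weights'' would be equally problematic, since with $w_j\le 0$ the Newton polyhedron of $G$ is unbounded in the $u_j$-direction and the diagonal-intersection formula breaks down. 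The paper's fix is cleaner: keep every weight strictly positive by assigning weight $\varepsilon>0$ to $u_j$ for $j>p$, apply the positive-weight estimate of Proposition~\ref{prop_bound1} to $G$ itself (with weighted order still $d_p$), get $\widetilde\alpha_0(G)\le\alpha_p+\tfrac{(r-p)\varepsilon}{d_p}$, and let $\varepsilon\to 0$. This requires no transfer at all.

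\emph{Lower bound.} Calling this part ``comparatively mechanical'' understates the difficulty. The specific strong factorizing resolution (blow-up of the origin followed by $\sum_i(e_{i+1}-e_i)$ further blow-ups of smooth centers) does have the crucial property that $g_r$, the dehomogenized form of $f_r$, does not vanish along any exceptional center, which gives $b_r=0$ and hence $a_G\le b_0 d_r$ for every exceptional prime $G$. When $\sum_j d_j\le n$ (so $p=r$), a one-line estimate then gives $\tfrac{k_G+1}{a_G}\ge r+\tfrac{(n-\sum d_j)b_0}{a_G}\ge\alpha_r$. But when $\sum_j d_j>n$, the numerator $n-\sum d_j$ is negative and the inequality $a_G\le b_0 d_r$ cuts the wrong way, so this estimate fails; worse, $\alpha_p$ can be strictly less than $\alpha_r$ when $p<r$, so you cannot simply aim for $\alpha_r$. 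The paper therefore splits the argument: for $\sum d_j>n$ it abandons Theorem~\ref{thm_lower_bound} entirely, uses the identity $\widetilde\alpha(Z)=\operatorname{lct}(\A^n,Z)$ (valid once one knows $\widetilde\alpha(Z)<r$), and runs a careful valuation-theoretic chain of inequalities involving the auxiliary quantities $\beta_{k_q}$ to prove $\operatorname{lct}\ge\alpha_p$. That chain is the real content of the lower bound in this regime, and your proposal gives no indication of how to handle it.
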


We are interested, in particular, in the case when $\widetilde{\alpha}(Z)>{\rm lct}(X,Z)$, that is, when $\widetilde{\alpha}(Z)>r$. The formula in the theorem implies that this is the case if and only if $\sum_{i=1}^rd_i<n$.
We also recover the well-known facts that under the assumptions in the theorem, the pair $(X,rZ)$ is log canonical if and only if $\sum_{i=1}^rd_i\leq n$ and $Z$ has
rational singularities if and only if $\sum_{i=1}^rd_i<n$.

We also note that if $d_1=\ldots=d_r$ and we only assume that $Z\cap \big({\mathbf A}^n\smallsetminus\{0\}\big)$ is smooth, then after replacing each $f_i$ by a general linear combination of $f_1,\ldots,f_r$,
the Kleinman-Bertini theorem implies the condition that on ${\mathbf A}^n\smallsetminus\{0\}$ each $H_i$ is smooth and $\sum_{i=1}^rH_i$ has simple normal crossings. 
Therefore the above theorem implies the formula for the minimal exponent in \cite[Example~4.23]{CDMO}.

The upper bound for $\widetilde{\alpha}(Z)$ in Theorem~\ref{thm_example} can be extended to the weighted homogeneous case, even without assuming that the equations themselves are homogeneous.
The hypersurface case follows directly from a well-known formula for the minimal exponent of an isolated singularity that is nondegenerate with respect to its Newton polyhedron and the semicontinuity
of the minimal exponent in families. We then obtain the following result for complete intersections: consider on 
$R={\mathbf C}[x_1,\ldots,x_n]$ the grading such that ${\rm deg}(x_i)=w_i>0$ for $1\leq i\leq n$. For every nonzero $f\in R$, we denote by ${\rm wt}(f)$ the smallest degree of a monomial
$x^u=x_1^{u_1}\cdots x_n^{u_n}$ that 
appears with a nonzero coefficient in $f$. 

\begin{thm}\label{thm_upper_bound}
With the above notation, suppose that $f_1,\ldots,f_r\in (x_1,\ldots,x_n)^2\subseteq R$ are such that ${\rm wt}(f_i)=d_i$, for $1\leq i\leq r$, with
$d_1\leq d_2\leq\ldots\leq d_r$. If $Z$ is 
a complete intersection of pure codimension $r$ in some neighborhood of $0$,
then
$$\widetilde{\alpha}_0(Z)\leq \min\big\{i+\tfrac{1}{d_i}(w_1+\ldots+w_n-d_1-\ldots-d_i)\mid 1\leq i\leq r\big\}.$$
\end{thm}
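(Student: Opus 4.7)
The plan is to fix $i \in \{1,\ldots,r\}$, establish the single upper bound
\[
\widetilde{\alpha}_0(Z) \leq i + \tfrac{1}{d_i}(w_1+\cdots+w_n - d_1-\cdots-d_i),
\]
and take the minimum over $i$. Set $g := \sum_{j=1}^r f_j y_j$ on $Y = \mathbf{A}^n \times \mathbf{A}^r$. The hypersurface reduction $\widetilde{\alpha}(Z) = \widetilde{\alpha}(g\vert_W)$ from \cite{CDMO}, localized at $0 \in \mathbf{A}^n$, gives $\widetilde{\alpha}_0(Z) \leq \widetilde{\alpha}_{(0,e_i)}(g)$ where $e_i$ is the $i$-th standard basis vector of $\mathbf{A}^r$, so it suffices to bound the right-hand side.

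To move the computation to the origin, translate $\tilde{y}_i := y_i - 1$ and rewrite
\[
g = f_i(1 + \tilde{y}_i) + \sum_{j \neq i} f_j y_j.
\]
The factor $1 + \tilde{y}_i$ is a local unit at the new origin, so the change of variables $z_j := y_j/(1 + \tilde{y}_i)$ (for $j \neq i$) together with $z_i := \tilde{y}_i$ is a local biholomorphism producing $g = (1 + z_i) \cdot h_i$, where
\[
h_i := f_i + \sum_{j \neq i} f_j z_j.
\]
Since $h_i$ does not depend on $z_i$ and the minimal exponent is invariant under multiplication by a local unit and under taking a product with a smooth factor, one gets $\widetilde{\alpha}_{(0,e_i)}(g) = \widetilde{\alpha}_0(h_i)$, with $h_i$ now viewed as a polynomial on $\mathbf{A}^{n+r-1}$ in the variables $x_1,\ldots,x_n$ and the $z_j$ for $j \neq i$.

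The final step is to apply the hypersurface case ($r=1$) of Theorem~\ref{thm_upper_bound} to $h_i$. Equip $\mathbf{A}^{n+r-1}$ with the weights $\mathrm{wt}(x_k) := w_k$ and, for $j \neq i$, $\mathrm{wt}(z_j) := d_i - d_j$ when $j < i$ with $d_j < d_i$, and $\mathrm{wt}(z_j) := \epsilon$ otherwise, for a small $\epsilon > 0$. Under this weighting every term of $h_i$ has weighted order $\geq d_i$ while the $f_i$-summand ensures $\mathrm{wt}(h_i) = d_i$ exactly, so the hypersurface bound reads
\[
\widetilde{\alpha}_0(h_i) \leq \frac{w_1+\cdots+w_n + \sum_{j \neq i} \mathrm{wt}(z_j)}{d_i}.
\]
As $\epsilon \to 0^+$, the right-hand side converges to $\tfrac{\sum_k w_k + \sum_{j<i}(d_i-d_j)}{d_i} = i + \tfrac{1}{d_i}(\sum_k w_k - \sum_{j \leq i} d_j)$, giving the required bound.

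The main obstacle is to justify cleanly the absorption of the unit $1+\tilde{y}_i$ into $h_i$ and to check that $h_i$, viewed on the $(n+r-1)$-dimensional ambient, still satisfies the hypothesis of the hypersurface case that it lies in the square of the maximal ideal at the origin; this is immediate because $f_i$ already has multiplicity $\geq 2$ at $0$ and each term $f_j z_j$ has multiplicity $\geq 3$. The limit $\epsilon \to 0^+$ is harmless because the bound holds for every $\epsilon > 0$ while $\widetilde{\alpha}_0(h_i)$ does not depend on $\epsilon$.
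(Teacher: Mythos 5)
Your argument is correct and is essentially the same as the paper's: both reduce to the hypersurface $h_p = f_p + \sum_{j\neq p} f_j z_j$ (the paper obtains it by the scaling $z_j = y_j/y_p$ on the chart $y_p\neq 0$, you by translating to $(0,e_i)$ and absorbing the unit $1+z_i$, which is the same change of variables), and both then apply the weighted-homogeneous hypersurface bound with the same weights $d_i - d_j$ (or $\epsilon$) on the $z_j$ and let $\epsilon\to 0$. The only cosmetic difference is that you prove the bound for each $i$ and take the minimum, whereas the paper first picks $p$ minimizing $\alpha_i$ (using Lemma~\ref{lem-eq}i) to ensure $d_{p-1}<d_p$ so that all weights can be taken positive without $\epsilon$); your choice to use $\epsilon$ also when $d_j=d_i$ handles that just as well.
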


For the precise definition of $\widetilde{\alpha}_0(Z)$ the local version of the minimal exponent of $Z$, see Section~\ref{Section2}.
We expect that if, in addition, $f_1,\ldots,f_r$ are homogeneous with respect to the above grading and the hypersurfaces $H_i$ defined by $f_i$ satisfy 
a suitable transversality assumption on ${\mathbf A}^n\smallsetminus\{0\}$ (for example, each $H_i$ is irreducible and 
$\sum_{i=1}^rH_i$ has simple normal crossings in ${\mathbf A}^n\smallsetminus 0\}$),
then the inequality in Theorem~\ref{thm_upper_bound} is an equality. When all $f_i$ have the same degree, this can be proved as in \cite[Example~4.23]{CDMO}.
When the degrees are different, however, we can only prove the assertion in the usual homogeneous case. 

The key ingredient in the proof of the lower bound for $\widetilde{\alpha}(Z)$ in Theorem~\ref{thm_example}
is a result of independent interest, giving a lower bound for the minimal exponent of a local complete intersection $Z$ in $X$ in terms of a suitable resolution of $(X,Z)$: a 
\emph{strong factorizing resolution} in the sense of Bravo and Villamayor \cite{BV}. Under the assumption that $Z$ is generically reduced, this is a proper morphism $\pi\colon \widetilde{X}\to X$ which is an isomorphism over the complement
$X\smallsetminus Z_{\rm sing}$ of the singular locus of $Z$,
with $\widetilde{X}$ smooth, and such that the reduced exceptional divisor $E$ and the strict transform $\widetilde{Z}$ of $Z$ have simple normal crossings, and $\widetilde{Z}$
is smooth. Moreover, we have a factorization
\begin{equation}\label{eq_factorization}
\cI_Z\cdot\cO_{\widetilde{X}}=\cI_{\widetilde{Z}}\cdot\cO_{\widetilde{X}}(-F),
\end{equation}
for an effective divisor $F$ supported on $E$, where $\cI_Z$ and $\cI_{\widetilde{Z}}$ are the ideals of $Z$ and $\widetilde{Z}$ in $X$ and $\widetilde{X}$, respectively. Note that the usual Hironaka algorithm does not guarantee the latter condition; the existence of strong factorizing resolutions for all generically reduced $Z$ is the main result of \cite{BV}. Given such a resolution, we write $E=\sum_{j=1}^NE_j$ as the sum of prime divisors and for every $j$,
we denote by $a_j$ and $k_j$ the coefficients of $E_j$ in the divisors $F$ and, respectively, the relative canonical divisor $K_{\widetilde{X}/X}$.

\begin{thm}\label{thm_lower_bound}
Suppose that $X$ is a smooth complex algebraic variety and $Z$ is a reduced subscheme of $X$ that is a local complete intersection, of pure codimension $r$. If $\pi\colon \widetilde{X}\to X$ is a strong factorizing resolution
of $(X,Z)$ as above, then
$$\widetilde{\alpha}(Z)\geq\min_{1\leq j\leq N}\frac{k_j+1}{a_j}.$$
\end{thm}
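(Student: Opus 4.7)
The strategy is to reduce to the hypersurface case by invoking the formula $\widetilde{\alpha}(Z)=\widetilde{\alpha}(g\vert_W)$ from \cite{CDMO} that was recalled in the introduction, and then bound the minimal exponent of the hypersurface via a log resolution built out of the given strong factorizing resolution. Working locally on $X$ (which is harmless since the minimal exponent is computed from an open cover), I may assume $Z=V(f_1,\ldots,f_r)$ for a regular sequence. Set $g=\sum_j f_j y_j\in \cO_Y(Y)$ with $Y=X\times \A^r$ and $W=X\times (\A^r\smallsetminus\{0\})$, and let $\widetilde{W}:=\widetilde{X}\times (\A^r\smallsetminus\{0\})$ with $\rho:=(\pi\times \id_{\A^r})\vert_{\widetilde{W}}\colon \widetilde{W}\to W$. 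It suffices to show that $\widetilde{\alpha}(g\vert_W)\geq \min_j (k_j+1)/a_j$.

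To this end I verify that $\rho$ is a log resolution of $(W,V(g\vert_W))$ and compute the relevant discrepancies and multiplicities. Near a point where $\widetilde{Z}$ meets the exceptional divisor $E$, pick SNC coordinates $(x_1,\ldots,x_n)$ on $\widetilde{X}$ with $\widetilde{Z}=\{x_1=\cdots=x_r=0\}$ and each local branch of $E$ given by $\{x_{i_j}=0\}$ for some $i_j\in\{r+1,\ldots,n\}$ (this is the content of the SNC hypothesis together with transversality of $\widetilde{Z}$ and $E$). The factorization $\cI_Z\cO_{\widetilde{X}}=\cI_{\widetilde{Z}}\cO(-F)$ yields $\pi^*f_k=h\cdot u_k$, where $h$ is the monomial $\prod_j x_{i_j}^{a_j}$ and $(u_1,\ldots,u_r)=(x_1,\ldots,x_r)$. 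Writing $u_k=\sum_\ell \alpha_{k\ell}(x)x_\ell$ with $(\alpha_{k\ell})$ invertible along $\widetilde{Z}$, the fiberwise linear substitution $\tilde{y}_\ell=\sum_k \alpha_{k\ell}(x)y_k$ is an automorphism over a neighborhood of $\widetilde{Z}$, and in these coordinates $\rho^*g=h\cdot G$ with $G=\sum_\ell x_\ell \tilde{y}_\ell$. The strict transform of $V(g\vert_W)$ is $V(G)$; its singular locus is $\widetilde{Z}\times\{0\}$, which is disjoint from $\widetilde{W}$, so $V(G)$ is smooth on $\widetilde{W}$. A direct check of differentials shows it meets the pulled-back exceptional divisors transversally, since $dG$ lies in the span of $dx_1,\ldots,dx_r,d\tilde{y}_1,\ldots,d\tilde{y}_r$, which is complementary to the span of $dx_{i_j}$ with $i_j>r$. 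Hence $\rho$ is a log resolution with $\rho^*(V(g\vert_W))=V(G)+\sum_j a_j (E_j\vert_{\widetilde{W}})$ and $K_{\widetilde{W}/W}=\sum_j k_j (E_j\vert_{\widetilde{W}})$.

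The final step applies a lower bound for the minimal exponent of a reduced hypersurface in terms of a log resolution. The naive Lichtin bound would include the ratio $(0+1)/1=1$ coming from the smooth strict transform $V(G)$ and hence only deliver $\widetilde{\alpha}(g\vert_W)\geq \min(1,\min_j (k_j+1)/a_j)$, which is already insufficient when $\widetilde{\alpha}(Z)>1$; this is precisely the rational-singularities range in which Theorem \ref{thm_example} is most informative. The essential input is therefore a refinement of Lichtin's bound that, under the hypothesis that the strict transform in the log resolution is smooth and in SNC position with the exceptional divisor, removes the strict transform from the minimum—a statement available through Musta\c{t}\u{a}--Popa's work on Hodge ideals and the microlocal $V$-filtration of a hypersurface. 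Granting this refined bound gives $\widetilde{\alpha}(g\vert_W)\geq \min_j (k_j+1)/a_j$, and combined with the opening reduction this proves the theorem.

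The central difficulty is thus not the geometric construction (which is a direct unfolding of the strong factorizing structure, as sketched above) but the correct formulation and invocation of this sharper hypersurface bound; in particular, it is crucial that $V(G)$ is honestly smooth on $\widetilde{W}$, so any $1/1$ contribution can be discarded.
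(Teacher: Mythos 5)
Your proof is correct and follows essentially the same strategy as the paper: reduce to the hypersurface case via $g=\sum_j f_j y_j$ on $W=X\times(\A^r\smallsetminus\{0\})$, verify that $\pi\times\id_{\A^r}$ restricted over $W$ is a log resolution of $(W,\operatorname{div}(g)|_W)$ with smooth strict transform, exceptional divisors $E_j\times(\A^r\smallsetminus\{0\})$ carrying the same $a_j$ and $k_j$, and then apply the hypersurface lower bound. The ``refined bound'' you invoke is exactly \cite[Corollary~D]{MP} (see also \cite[Corollary~1.5]{DM}), which the paper cites for precisely this step and which, as you correctly emphasize, runs only over exceptional divisors and so discards the harmless $1/1$ contribution of the strict transform.
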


Note that if $Z$ is a hypersurface in $X$, then the condition (\ref{eq_factorization}) is automatically satisfied, hence a strong factorizing resolution is simply a log resolution of $(X,Z)$
such that $\widetilde{Z}$ is smooth.
In this case, the inequality in Theorem~\ref{thm_lower_bound} was proved in \cite[Corollary~D]{MP} using the theory of Hodge ideals (see also \cite[Corollary~1.5]{DM} for a more elementary proof). 
We deduce the general case in Theorem~\ref{thm_lower_bound} by reducing it to the case of hypersurfaces. 
In order to get the lower bound for $\widetilde{\alpha}(Z)$ in Theorem~\ref{thm_example}, we construct an explicit strong factorizing resolution of $({\mathbf A}^n,Z)$.

\section{An upper-bound in the weighted homogeneous case}\label{Section2}

Our goal in this section is to prove Theorem~\ref{thm_upper_bound}. 
Let us begin by recalling the local version of the minimal exponent discussed in the Introduction. If $Z$ is a local complete intersection in the smooth variety $X$,
of pure codimension $r$, and $P\in Z$, then 
for every open neighborhood $U$ of $P$, we have  $\widetilde{\alpha}(Z\cap U)\geq\widetilde{\alpha}(Z)$ and $\widetilde{\alpha}(Z\cap U)$ is constant if $U$ is small enough.
This constant value is denoted by $\widetilde{\alpha}_P(Z)$. It is then easy to see that $\widetilde{\alpha}(Z)=\min_{P\in Z}\widetilde{\alpha}_P(Z)$. We refer to  
\cite[Definition~4.16]{CDMO} and the discussion around it for details. Of course, $\widetilde{\alpha}_P(Z)$ is defined if we only know that $Z$ is a local complete intersection of codimension
$r$ at $P$. If $Z$ is a hypersurface defined by $f$, we also write $\widetilde{\alpha}_P(f)$ for $\widetilde{\alpha}_P(Z)$.

We begin with the following result in the case of hypersurfaces. We let $R={\mathbf C}[x_1,\ldots,x_n]$ and use the notation in Theorem~\ref{thm_upper_bound}.

\begin{prop}\label{prop_bound1}
If $f\in R$ is nonzero and $0\in Z$ is a singular point, then
$$\widetilde{\alpha}_0(f)\leq \frac{w_1+\ldots+w_n}{{\rm wt}(f)}.$$
\end{prop}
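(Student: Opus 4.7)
The plan is to reduce to a Newton-nondegenerate isolated hypersurface singularity, where the minimal exponent can be read off from the Newton polyhedron, and then pass back to $f$ by the lower semi-continuity of $\widetilde{\alpha}_0$ in families.

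First, I would observe that if $d := \mathrm{wt}(f)$ and $|w| := w_1 + \cdots + w_n$, then since every monomial of $f$ lies in the weighted half-space $H^+ := \{u \in \R_{\geq 0}^n : \sum_{i=1}^n w_i u_i \geq d\}$ with at least one on the bounding hyperplane $H := \{u : \sum_{i=1}^n w_i u_i = d\}$, the Newton polyhedron $N_+(f) := \mathrm{conv}(\mathrm{supp}(f)) + \R_{\geq 0}^n$ satisfies $N_+(f) \subseteq H^+$ and $N_+(f) \cap H \neq \emptyset$. Consequently, the Newton distance
$$t_0(f) := \min\{t > 0 : t \cdot (1, \ldots, 1) \in N_+(f)\}$$
satisfies $t_0(f) \geq d/|w|$.

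Next, I would perturb $f$ to a Newton-nondegenerate polynomial with isolated singularity at $0$, while preserving the weighted-support condition. Concretely, set
$$f_{\mathbf{c}}(x) := f(x) + \sum_{\alpha \in \Lambda} c_\alpha x^\alpha,$$
where $\Lambda \subset H^+ \cap \N^n$ is a large finite set of exponents with $|\alpha| \geq 2$ for all $\alpha \in \Lambda$, chosen to include pure-power monomials $x_i^{N_i}$ with $w_i N_i \geq d$ and $N_i \gg 0$. For $\mathbf{c}$ in a Zariski-dense open subset of $\C^\Lambda$: (i) $f_{\mathbf{c}} \in \fm^2$ has isolated singularity at $0$, because the high pure powers force the Jacobian ideal to be $\fm$-primary; (ii) $f_{\mathbf{c}}$ is nondegenerate with respect to $N_+(f_{\mathbf{c}})$; and (iii) $N_+(f_{\mathbf{c}}) \subseteq H^+$ with $N_+(f_{\mathbf{c}}) \cap H \neq \emptyset$, since $\Lambda \subseteq H^+$ and generic $\mathbf{c}$ does not cancel the weight-$d$ monomials of $f$. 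In particular, $t_0(f_{\mathbf{c}}) \geq d/|w|$.

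Then I would apply the classical Newton-polyhedron formula (going back to Varchenko and Saito) for the minimal exponent of a Newton-nondegenerate isolated hypersurface singularity, which gives $\widetilde{\alpha}_0(f_{\mathbf{c}}) = 1/t_0(f_{\mathbf{c}}) \leq |w|/d$. Finally, I would invoke the lower semi-continuity of $\widetilde{\alpha}_0$ in families: since $f$ appears as the special member $f_{\mathbf{0}}$ of the family $\{f_{\mathbf{c}}\}$,
$$\widetilde{\alpha}_0(f) = \widetilde{\alpha}_0(f_{\mathbf{0}}) \leq \widetilde{\alpha}_0(f_{\mathbf{c}}) \leq |w|/d$$
for generic $\mathbf{c}$, which is the desired inequality.

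The main obstacle is the perturbation step: arranging a single generic $\mathbf{c}$ that simultaneously makes $f_{\mathbf{c}}$ isolated at $0$, Newton-nondegenerate, and preserves the weighted-support condition $N_+(f_{\mathbf{c}}) \cap H \neq \emptyset$ with $N_+(f_{\mathbf{c}}) \subseteq H^+$. Each condition defines a Zariski-open subset of $\C^\Lambda$ provided $\Lambda$ is taken sufficiently large and is restricted to exponents of weight $\geq d$ and total degree $\geq 2$, so their intersection is nonempty.
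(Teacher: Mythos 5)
Your proposal is correct and follows essentially the same route as the paper: perturb $f$ by a generic linear combination of monomials of weight $\geq \mathrm{wt}(f)$ (including high pure powers $x_i^{N_i}$) to obtain a Newton-nondegenerate isolated singularity, read off its minimal exponent from the Newton polyhedron as the reciprocal of the Newton distance, bound that distance using the weighted hyperplane $\sum w_i u_i = \mathrm{wt}(f)$, and conclude by semicontinuity of $\widetilde{\alpha}_0$ in the family. The only presentational difference is that the paper parametrizes by all coefficients (on $\mathrm{supp}(f)$ together with the pure powers $x_i^N$), whereas you add $\sum c_\alpha x^\alpha$ to a fixed $f$; for the generic-nondegeneracy step to be clean you should take $\Lambda \supseteq \mathrm{supp}(f)$, which is permissible since all of $\mathrm{supp}(f)$ already has weight $\geq \mathrm{wt}(f)$ and degree $\geq 2$.
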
 

\begin{proof}
We write $f=\sum_{u\in\Lambda}a_ux^u$, with $\Lambda$ finite and 
$a_u\neq 0$ for all $u\in \Lambda$.  Let $N\geq 2$ be such that $Nw_i>{\rm wt}(f)$ for all $i$. 
We consider the family of hypersurfaces parametrized by the open subset $U\subseteq \A^{|\Lambda|+n}$, with the hypersurface corresponding to 
$v=\big((c_u)_{u\in\Lambda},b_1,\ldots,b_n\big)$ being defined by $h_v=\sum_{u\in\Lambda}c_ux^u+b_1x_1^N+\ldots+b_nx_n^N$ (here $U$ consists of those 
$v$ such that $h_v$ is nonzero).
It is clear that for  $v\in U$ general, $h_v$ has an isolated singularity at $0$ and it is nondegenerate with respect to its Newton polyhedron $P$
(recall that $P$ is the convex hull of $\bigcup_{u}(u+{\mathbf R}_{\geq 0}^n)$, where the union over all monomials $x^u$ that appear with nonzero
coefficient in the equation $h$ of the hypersurface).
In this case, it is known that the minimal exponent at $0$ of such a hypersurface is $1/c$, where
$$c=\min\big\{t>0\mid (t,\ldots,t)\in P\big\}$$
(see \cite{Varchenko}, \cite{EhlersLo}, or \cite{Saito-exponents}).
Note that $P$ is the convex hull of $\big(\Lambda\cup\{Ne_1,\ldots,Ne_n\}\big)+{\mathbf R}^n_{\geq 0}$, where $e_1,\ldots,e_n$ is the standard basis of ${\mathbf Z}^n$. Since $\sum_{i=1}^nu_iw_i\geq {\rm wt}(f)$ for all $u\in\Lambda\cup\{Ne_1,\ldots,Ne_n\}$, it follows that
$\sum_{i=1}^nu_iw_i\geq {\rm wt}(f)$ for all $u\in P$, and thus $c\cdot\sum_{i=1}^nw_i\geq~{\rm wt}(f)$. 
On the other hand, it follows from the semicontinuity of minimal exponents (see \cite[Theorem~E(2)]{MP}) that for every $v'\in U$,
we have $\widetilde{\alpha}_0(h_{v'})\leq \widetilde{\alpha}_0(h_v)=1/c$, when $v\in U$ general.
In particular, this applies for $f$, and we get
\[\widetilde{\alpha}_0(f)\leq \frac{1}{c}\leq \frac{w_1+\ldots+w_n}{{\rm wt}(f)}.\]
\end{proof}

Before giving the proof of Theorem~\ref{thm_upper_bound}, we give a lemma that describes the infimum in this theorem.

\begin{lem}\label{lem-eq}
Let $w\in {\mathbf R}$ and 
let $d_1\leq\ldots\leq d_r$ be positive integers. If for $1\leq i\leq r$, we put
$$
\alpha_i:=i+\tfrac{1}{d_i}(w-d_1-\ldots-d_i),
$$
then the following hold:
\begin{enumerate}
\item[i)] If $i\leq r-1$ is such that $d_i=d_{i+1}$, then $\alpha_i=\alpha_{i+1}$.
\item[ii)] If $i\leq r-1$ and $d_i<d_{i+1}$, then $\alpha_i\geq\alpha_{i+1}$ if and only if $d_1+\ldots+d_i\leq w$.
\item[ii)] We have $\min_i\alpha_i=\alpha_p$, 
where $p$ is the smallest $i\leq r$ that satisfies $d_1+\ldots+d_i>w$ (with the convention that $p=r$ if there is no such $i$).
\end{enumerate}
\end{lem}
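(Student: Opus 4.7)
The plan is to compute the one-step difference $\alpha_{i+1}-\alpha_i$ explicitly; all three parts then follow from a short case analysis. Writing $S_i := d_1+\cdots+d_i$, so that $S_{i+1}=S_i+d_{i+1}$, I would expand
$$\alpha_{i+1}-\alpha_i \;=\; 1 + \frac{w-S_{i+1}}{d_{i+1}} - \frac{w-S_i}{d_i}
\;=\; 1 + \frac{w-S_i}{d_{i+1}} - 1 - \frac{w-S_i}{d_i}
\;=\; (w-S_i)\left(\frac{1}{d_{i+1}}-\frac{1}{d_i}\right).$$
This single identity is the engine of the whole lemma.

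For part (i), if $d_i=d_{i+1}$ then the factor $\tfrac{1}{d_{i+1}}-\tfrac{1}{d_i}$ vanishes, so $\alpha_i=\alpha_{i+1}$. For part (ii), if $d_i<d_{i+1}$ then $\tfrac{1}{d_{i+1}}-\tfrac{1}{d_i}<0$, so the sign of $\alpha_{i+1}-\alpha_i$ is the opposite of the sign of $w-S_i$; thus $\alpha_i\geq\alpha_{i+1}$ iff $w-S_i\geq 0$, i.e.\ iff $S_i\leq w$. Both parts are immediate from the formula.

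For the final part (labelled (ii) in the statement but really (iii)), I would argue monotonicity on either side of the index $p$. Since the $d_j$ are positive integers, the sequence $S_i$ is strictly increasing, so $S_i\leq w$ for $i<p$ and $S_i>w$ for $i\geq p$. Combining parts (i) and (ii): for each $i<p$, either $d_i=d_{i+1}$ (giving $\alpha_i=\alpha_{i+1}$) or $d_i<d_{i+1}$ together with $S_i\leq w$ (giving $\alpha_i\geq\alpha_{i+1}$); hence $\alpha_1\geq\alpha_2\geq\cdots\geq\alpha_p$. Similarly, for each $i$ with $p\leq i\leq r-1$, either $d_i=d_{i+1}$ (so $\alpha_i=\alpha_{i+1}$) or $d_i<d_{i+1}$, and since $S_i\geq S_p>w$ the product $(w-S_i)(\tfrac{1}{d_{i+1}}-\tfrac{1}{d_i})$ is positive, so $\alpha_i<\alpha_{i+1}$; hence $\alpha_p\leq\alpha_{p+1}\leq\cdots\leq\alpha_r$. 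Therefore $\min_i\alpha_i=\alpha_p$, as claimed.

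This is essentially a one-line computation followed by bookkeeping, so I do not anticipate any genuine obstacle; the only thing to be careful about is the edge case $p=r$ (when no $S_i$ exceeds $w$), in which the second monotonicity range is empty and the conclusion $\min_i\alpha_i=\alpha_r$ falls out of the first range alone.
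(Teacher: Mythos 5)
Your one-step difference $\alpha_{i+1}-\alpha_i=(w-S_i)\bigl(\tfrac{1}{d_{i+1}}-\tfrac{1}{d_i}\bigr)$ is precisely the identity the paper states (as $\alpha_i-\alpha_{i+1}=\tfrac{(w-S_i)(d_{i+1}-d_i)}{d_id_{i+1}}$ over a common denominator), and your monotonicity argument on either side of $p$ is the "easy consequence" the paper leaves implicit. The proof is correct and takes the same approach as the paper, with slightly more detail spelled out for part (iii) and the edge case $p=r$.
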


\begin{proof}
The first two assertions follow from the fact that for $i\leq r-1$, we have
$$\alpha_i-\alpha_{i+1}=\frac{(w-d_1-\ldots-d_i)(d_{i+1}-d_i)}{d_id_{i+1}},$$
and the third assertion is an easy consequence.
\end{proof}

We can now prove the upper bound for the minimal exponent of complete intersections in terms of the weights of the defining equations.

\begin{proof}[Proof of Theorem~\ref{thm_upper_bound}]
For every $i$, with $1\leq i\leq r$, let
$$\alpha_i=i+\tfrac{1}{d_i}(w_1+\ldots+w_n-d_1-\ldots-d_i),
$$
and let $p$ be such that $\alpha_p=\min_i\alpha_i$. By Lemma~\ref{lem-eq}i), we may assume that if $p>1$, then $d_{p-1}<d_p$. 

Let $g=\sum_{j=1}^rf_jy_j\in \cO({\mathbf A}^n\times {\mathbf A}^r)$, where $y_1,\ldots,y_r$ are the coordinates on ${\mathbf A}^r$.
It follows from the description of the minimal exponent of $Z$ in terms of $g$ given in the Introduction that
if $U={\mathbf A}^r\smallsetminus \{0\}\supseteq U'=(y_p\neq 0)$, then
$$\widetilde{\alpha}_0(Z)=\max_{V\ni 0}\widetilde{\alpha}(g\vert_{V\times U})\leq \max_{V\ni 0}\widetilde{\alpha}(g\vert_{V\times U'}),$$
where $V$ runs over the open neighborhoods of $0$ in ${\mathbf A}^n$. We put $z_j=y_j/y_p$ for $1\leq j\leq r$, $j\neq p$,
so $z_1,\ldots,\widehat{z_p},\ldots,z_r$ can be viewed as coordinates
on ${\mathbf A}^{r-1}$. 
Since $g$ is homogeneous 
of degree $1$ with respect to $y_1,\ldots,y_r$, it follows that if we put 
$$h=g/y_p=f_1z_1+\ldots+f_{p-1}z_{p-1}+f_p+f_{p+1}z_{p+1}+\ldots+f_rz_r\in\cO({\mathbf A}^n\times {\mathbf A}^{r-1}),$$
then 
$$\widetilde{\alpha}(g\vert_{V\times U'})=
\widetilde{\alpha}(h\vert_{V\times {\mathbf A}^{r-1}})$$
(we use here the fact that the minimal exponent does not change by pull-back by a smooth surjective morphism, see for example \cite[Proposition~4.12]{CDMO}).
We thus conclude that
\begin{equation}\label{eq1_upper_bound}
\widetilde{\alpha}_0(Z)\leq \widetilde{\alpha}_{(0,0)}(h).
\end{equation}

By assumption, we have $f_p\in (x_1,\ldots,x_n)^2$,  and thus $h$ has a singular point at $(0,0)$. 
If we consider the weight of $z_j$ to be $d_p-d_j$ for $1\leq j\leq p-1$ and $\epsilon>0$ for $p+1\leq j\leq r$,
then we see that ${\rm wt}(h)=d_p$, hence it follows from Proposition~\ref{prop_bound1} that
\begin{equation}\label{eq2_upper_bound}
\widetilde{\alpha}_{(0,0)}(h)\leq \frac{1}{d_p}\big(w_1+\ldots+w_n+(d_p-d_1)+\ldots+(d_p-d_{p-1})+(r-p)\epsilon\big)
=\alpha_p+\frac{r-p}{d_p}\epsilon.
\end{equation}
By combining (\ref{eq1_upper_bound}) and (\ref{eq2_upper_bound}), and letting $\epsilon$ go to $0$, we obtain the inequality
in the theorem.
\end{proof}

\section{A general lower bound via a strong factorizing resolution}

In this section, we prove the lower bound on the minimal exponent in terms of a strong factorizing resolution.

\begin{proof}[Proof of Theorem~\ref{thm_lower_bound}]
We may and will assume that $X$ is affine and $Z$ is defined by a regular sequence $f_1,\ldots,f_r\in\cO_X(X)$. Let $g=f_1y_1+\ldots+f_ry_r\in\cO_Y(Y)$, where
$Y=X\times \A^r$, with $y_1,\ldots, y_r$ being the coordinates on $\A^r$. Let $W=X\times \big(\A^r\smallsetminus\{0\}\big)$, so
$\widetilde{\alpha}(Z)=\widetilde{\alpha}(g\vert_W)$.

 Consider now the morphism 
$$\varphi=\pi\times{\rm id}_{\A^r}\colon \widetilde{Y}=\widetilde{X}\times\A^r\to Y.$$
This is a projective morphism which is an isomorphism over the complement of $Z_{\rm sing}\times\A^r$. The exceptional divisors of $\varphi$ are the
$E_i\times\A^r$, with $1\leq i\leq N$. Moreover, it follows from the definition of a strong factorizing resolution that we can cover
$\widetilde{X}$ by open subsets $V_j$, such that on each $V_j\times \A^r$ we can write 
$$g\circ\varphi\vert_{V_j\times\A^r}=v_j\cdot\sum_{i=1}^rh_iy_i,$$
where the divisor ${\rm div}(v_j)$ defined by $v_j$ is supported on $G=E\times\A^r$ and $h_1,\ldots,h_r$ generate the ideal of $\widetilde{Z}$ in $V_j$. 
Moreover, the coefficient of $E_i\times {\mathbf A}^r$ in ${\rm div}(v_j)$ is $a_i$. Note that if $V_j\cap\widetilde{Z}=\emptyset$, then $\sum_{i=1}^rh_iy_i$ defines a smooth
hypersurface in $V_j\times {\mathbf A}^r$, that has simple normal crossings with $G$.

By assumption, $\widetilde{Z}$ is smooth, of codimension $r$ in $\widetilde{X}$, and has simple normal crossings with $E$ (that is, both $E$ and $E\vert_{\widetilde{Z}}$ are reduced simple normal crossing
divisors). Therefore we may and will assume that for every $j$ such that $V_j\cap\widetilde{Z}\neq\emptyset$,
we have algebraic coordinates $x_1,\ldots,x_n$ on $V_j$ such that $h_i=x_i$ for $i\leq r$
and $E\vert_{V_j}=\sum_{i=r+1}^{r+s}a_i\cdot {\rm div}(x_i)$.

Let $\varphi_W\colon \varphi^{-1}(W)\to W$ be the restriction of $\varphi$ over $W$. Note that on $\varphi^{-1}(W)\cap (V_j\times \A^r)$, with $V_j\cap\widetilde{Z}\neq\emptyset$, the divisor defined by
$$(x_1y_1+\ldots+x_ry_r)\cdot\prod_{i=r+1}^{r+s}x_i^{a_i}$$
has simple normal crossings. Since $g\circ\varphi$ clearly defines a simple normal crossing divisor in $\varphi^{-1}(W)\cap (V_j\times \A^r)$ when $V_j\cap\widetilde{Z}=\emptyset$,
we conclude that $\varphi_W$ is a log resolution of $(W, {\rm div}(g)\vert_W)$ which is an isomorphism over $W\smallsetminus V(g)$. 
The exceptional divisors of $\varphi_W$ are the $E'_i=E_i\times \big(\A^r\smallsetminus \{0\}\big)$ and the relative canonical divisor of $\varphi_W$ is
$\sum_{i=1}^Nk_iE'_i$. Moreover, the divisor ${\rm div}(g)\vert_W$ is reduced: its singular locus is contained in $Z_{\rm sing}\times \big({\mathbf A}^r\smallsetminus\{0\}\big)$
(see \cite[Lemma~4.22]{CDMO}) and thus ${\rm div}(g)$ is generically reduced, hence reduced. In addition, its strict transform on $\varphi^{-1}(W)$ is smooth:
this is clear on $V_j\times \big({\mathbf A}^r\smallsetminus \{0\}\big)$ if $V_j\cap\widetilde{Z}=\emptyset$, while if $V_j\cap\widetilde{Z}\neq\emptyset$, it follows from the fact that 
it is defined by $\sum_{i=1}^rx_iy_i$.
We can thus apply the lower bound on the minimal exponent of a hypersurface
in terms of a log resolution (see \cite[Corollary~D]{MP} or \cite[Corollary~1.5]{DM})
to conclude that 
$$\widetilde{\alpha}(Z)=\widetilde{\alpha}(g\vert_U)\geq \min_{1\leq i\leq N}\frac{k_i+1}{a_i},$$
which is the assertion in the theorem.
\end{proof}

\section{The formula in the homogeneous case}

Our main goal in this section is to prove Theorem~\ref{thm_example}. 
In order to prove 
the lower bound in the theorem, we will use Theorem~\ref{thm_lower_bound}. We thus proceed to describe 
a strong factorizing resolution of $({\mathbf A}^n, Z)$. 

With the notation in Theorem~\ref{thm_example}, let $\pi_1\colon X_1\to {\mathbf A}^n$
be the blow-up of the origin, with exceptional divisor $E_1$. Suppose that $k\geq 1$ and $1\leq p_1, p_2,\ldots,p_k$ are such that
$$d_1=\ldots=d_{p_1}<d_{p_1+1}=\ldots=d_{p_1+p_2}<\ldots <d_{p_1+\ldots+p_{k-1}+1}=\ldots=d_{p_1+\ldots+p_k}.$$
Note that $p_1+\ldots+p_k=r$.
In order to simplify the notation, we put $e_j=d_{p_1+\ldots+p_j}$ for $1\leq j\leq k$.
We define a morphism $\pi\colon Y\to {\mathbf A}^n$ to be the composition of $\pi_1$ with $\sum_{i=1}^{k-1}(e_{i+1}-e_i)$ smooth blow-ups, as follows.
First, we consider $(e_2-e_1)$ blow-ups, each of these blowing up the intersection of the previous exceptional divisor with the strict transforms of
$H_1,\ldots,H_{p_1}$. We next consider $(e_3-e_2)$ blow-ups, each of these blowing up the intersection of the previous exceptional divisor with the strict transforms of
$H_1,\ldots,H_{p_1+p_2}$, etc. 

\begin{prop}\label{strong_res}
With the above notation, the composition $\pi\colon Y\to {\mathbf A}^n$ has the following properties:
\begin{enumerate}
\item[i)] If $r\leq n-1$, then $\pi$ is a strong factorizing resolution of $({\mathbf A}^n,Z)$.
\item[ii)] If $r=n$, then $\pi$ is a log resolution of the pair $({\mathbf A}^n,Z)$. 
\end{enumerate}
\end{prop}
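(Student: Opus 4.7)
The plan is to verify the required properties step by step through the sequence of blow-ups, using induction. After the first blow-up $\pi_1 : X_1 \to \mathbf{A}^n$ of the origin, the SNC hypothesis on $\mathbf{A}^n \smallsetminus \{0\}$ translates into the statement that the projective hypersurfaces $\bar H_i \subset \mathbf{P}^{n-1} \cong E_1$ form an SNC divisor on $\mathbf{P}^{n-1}$; this implies $X_1$ is smooth and $E_1 + \sum_i \tilde H_i$ is SNC, and by homogeneity, in a standard affine chart with local equation $u$ for $E_1$, one has $\pi_1^* f_i = u^{d_i} \tilde f_i$ for $\tilde f_i$ a local equation of the strict transform $\tilde H_i$. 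Using SNC, near a point of $E_1 \cap \tilde H_{i_1} \cap \cdots \cap \tilde H_{i_s}$ we may choose local coordinates $(u, y_{i_1}, \ldots, y_{i_s}, z_*)$ with $E_1 = \{u=0\}$ and $\tilde H_{i_l} = \{y_{i_l} = 0\}$, so that $\pi_1^* f_{i_l} = u^{d_{i_l}} y_{i_l}$ up to a unit.

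For the subsequent rounds, the inductive claim after round $j$ (which consists of $e_{j+1} - e_j$ blow-ups) would read: (a) the current variety is smooth; (b) the reduced exceptional divisor together with all strict transforms $\tilde H_i$ is SNC; and (c) in the main chart near the most recent exceptional component with local equation $u$, one has $\pi^* f_i = u^{e_{j+1}} \tilde f_i$ for $i \leq P_j := p_1 + \ldots + p_j$ and $\pi^* f_i = u^{d_i} \tilde f_i$ for $i > P_j$. Each blow-up of the next round blows up $E_{\mathrm{last}} \cap \tilde H_1 \cap \cdots \cap \tilde H_{P_{j+1}}$; by SNC this is a smooth transverse intersection of components of the current SNC divisor, so blowing it up preserves both smoothness and SNC. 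In the central chart of such a blow-up (coordinates obtained by $y_i = u s_i$ for $i \leq P_{j+1}$), the exponent of $u$ in each $\pi^* f_i$ with $i \leq P_{j+1}$ rises by one, while those with $i > P_{j+1}$ are unaffected. After $e_{j+2} - e_{j+1}$ such blow-ups, the inductive statement closes, because $d_i = e_{j+2}$ for $P_{j+1} < i \leq P_{j+2}$.

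At the end of round $k-1$ we have $\pi^* f_i = u^{e_k} \tilde f_i$ for all $i$ in the main chart, and this globalizes to the factorization $\mathcal{I}_Z \cdot \mathcal{O}_Y = \mathcal{I}_{\tilde Z} \cdot \mathcal{O}_Y(-F)$ with $F$ an effective divisor supported on the exceptional locus (the coefficients along each exceptional component are read off from the local chart computations); combined with (a) and (b) this proves (i). For (ii), observe that if $r = n$, no point of the $(n-1)$-fold $\mathbf{P}^{n-1}$ can lie on all $n$ of the $\bar H_i$ (SNC forbids meeting $n$ divisors at a point in dimension $n-1$), so $\bar H_1 \cap \cdots \cap \bar H_n = \emptyset$ and hence $\tilde Z = \tilde H_1 \cap \cdots \cap \tilde H_n = \emptyset$ throughout the resolution; the factorization becomes $\mathcal{I}_Z \cdot \mathcal{O}_Y = \mathcal{O}_Y(-F)$, realizing $\pi$ as a log resolution. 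I expect the main obstacle to be the careful inductive bookkeeping across \emph{all} affine charts (not only the distinguished central chart) to ensure that both the SNC structure and the factorization hold globally, especially at points lying on the intersections of strict transforms of hypersurfaces from different degree groups together with several exceptional components.
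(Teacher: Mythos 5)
Your approach is essentially the paper's: blow up the origin, observe that the SNC hypothesis gives the exceptional divisor plus all strict transforms $\widetilde H_i$ an SNC structure, and then inductively run the rounds of smooth blow-ups, tracking the $u$-exponent of each $\pi^*f_i$. Your inductive statement (a)--(c) and the degree bookkeeping (that after round $j$ the exponent of $u$ is $e_{j+1}$ for $i\le P_j$ and $d_i$ otherwise) match the paper's chart computations, and your argument for (ii) via $\bar H_1\cap\cdots\cap\bar H_n=\emptyset$ in $\mathbf P^{n-1}$ is exactly right.

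The gap you flag at the end is the real one, and it is not hard to fill; the paper does so as follows. At each blow-up of $E_{\mathrm{last}}\cap\widetilde H_1\cap\cdots\cap\widetilde H_{P}$ there are two kinds of charts. In the ``central'' chart your computation applies. In each of the other charts, the pulled-back ideal $\cI_Z\cdot\cO$ is already generated by a single monomial in $u_0,u_j$ (the paper computes it to be $(u_j^{d_1+1}u_0^{d_1})$ in the chart where $z_j$ is the chosen coordinate), i.e.\ it is the ideal of an effective divisor supported on the exceptional locus, so the strong-factorizing condition is vacuously satisfied over those charts and no further tracking is needed. Separately, one must also handle points of $E_1$ that lie on only some of the strict transforms $\widetilde H_i$ (the paper's Case~3): there a subset of the generators $z_0^{d_j}z_j$ is replaced by a pure power $z_0^{d_{q+1}}$, and after the appropriate number of blow-ups the pull-back again collapses to a principal monomial ideal, so the strict transform $\widetilde Z$ never enters the picture and the factorization condition is automatic. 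Once you add these two observations, your induction closes globally: the central-chart computation handles a neighborhood of $\widetilde Z$, and the principal-ideal observation handles all other points lying over the exceptional locus.

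One minor terminological caution: what you call ``the main chart near the most recent exceptional component'' should really be phrased pointwise --- near any point of $\widetilde Z$ lying over $E_1$, one lands in a central chart at every stage, which is why (c) is enough to determine $F$ and to verify $\cI_Z\cdot\cO_Y=\cI_{\widetilde Z}\cdot\cO_Y(-F)$ near $\widetilde Z$. Elsewhere the principal-ideal observation takes over.
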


\begin{proof}
We note that if $r\leq n-1$, then the assumption on $Z$ implies that it is generically reduced, hence reduced, since it is a complete intersection and thus Cohen-Macaulay. 
Therefore, in this case, it makes sense to say that $\pi$ is a strong factorizing resolution. 

The blow-up $X_1$ is covered by affine open charts $U_1,\ldots,U_n$, where $U_i$ has coordinates 
$$x_i, y_1,\ldots,y_{i-1},y_{i+1},\ldots,y_n$$ such that
$x_j=x_iy_j$ for all $j\neq i$. Note that if $I_Z=(f_1,\ldots,f_r)$, then 
$$I_Z\cdot \cO_{U_i}=(x_i^{d_1}g_1,\ldots,x_i^{d_r}g_r),$$
where $g_j=f_j(y_1,\ldots,y_{i-1},1,y_{i+1},\ldots,y_n)$ for $1\leq j\leq r$. Moreover, $E_1\cap U_i$ is defined by $x_i$ and if $r<n$, then the strict transform $\widetilde{Z}$ of $Z$ 
on $X_1$ is defined in $U_i$ by $(g_1,\ldots,g_r)$, hence it is smooth.
Note that if $k=1$ (that is, we have $d_1=\ldots=d_r$), then $\pi_1$ is a strong factorizing resolution when $r<n$ and is a log resolution of $(X,Z)$ for $r=n$.
Therefore we are done in this case.

Suppose now that $k>1$.
We note that, by our assumption on $f_1,\ldots,f_r$, the hypersurfaces defined by $x_i,g_1,\ldots,g_r$ in $U_i$ are smooth and their sum has simple normal crossings. 
It follows that for every point $P\in U_i$, we can find algebraic coordinates $z_0,\ldots,z_{n-1}$ in a neighborhood $W_P$ of $P$ such that the ideal $I_Z\cdot \cO_{W_P}$
is equal to: 

\noindent {\bf Case 1}. $(z_1,\ldots,z_r)$. This is the case when $P\not\in E_1$, when we may assume that $W_P\cap E_1=\emptyset$. This case is clear: it follows from 
the definition of $\pi$ that the morphism $Y\to X_1$ is an isomorphism over $W_P$ and it is clear that 
above $W_P$ the condition for $\pi$ to be a strong factorizing resolution (if $r<n$) or a 
log resolution of $(X,Z)$ (if $r=n$) is satisfied. 

\noindent {\bf Case 2}. $(z_0^{d_1}z_1,\ldots,z_0^{d_r}z_{r})$. This is the case when $r<n$ and $P$ lies on the hypersurfaces defined by $x_i,g_1,\ldots,g_r$.
Note that $\widetilde{Z}$ is defined in $W_P$ by $(z_1,\ldots,z_r)$.

\noindent {\bf Case 3}. $(z_0^{d_1}z_1,\ldots,z_0^{d_q}z_q, z_0^{d_{q+1}})$, for some $q<r$. This is the case when $P$ lies on $E_1$ and $g_j(P)=0$ for $j\leq q$, but
$g_{q+1}(P)\neq 0$. After getting rid of some redundant generators, we may assume that $q=p_1+\ldots+p_m$. If $r<n$, then we see that $\widetilde{Z}$ does not meet
$W_P$ in this case. 

We now consider the next blow-up $\pi_2\colon X_2\to X_1$ in our sequence: we blow up along $E_1\cap \widetilde{H_1}\cap \ldots\cap\widetilde{H_{p_1}}$,
where $\widetilde{H_j}$ denotes the strict transform of $H_j$ on $X_1$. Let's describe $\pi_2$ over the above open subset $W_P\subseteq U_i$ when we are in Case 2 or Case 3.
Note that we are blowing up along the zero locus
of $(z_0,z_1,\ldots,z_{p_1})$, which is smooth.
Let $V_j$ be the chart in $\pi_2^{-1}(W_P)$ given by 
$$z_{\ell}=u_{\ell}\quad\text{for}\quad\ell\in \{j,p_1+1,\ldots,n\} \quad\text{and}\quad z_{\ell}=u_ju_{\ell}\quad\text{for}\quad 0\leq\ell\leq p_1, \ell\neq j$$
for some $j$, with $1\leq j\leq p_1$.
An easy computation shows that $I_Z\cdot\cO_{V_j}$ is generated by $u_j^{d_1+1}u_0^{d_1}$ in both Cases 2 and 3. Since $u_j$ defines the $\pi_2$-exceptional divisor
and $u_0$ defines the strict transform of $E_1$, we see that $I_Z\cdot\cO_{V_j}$ is the ideal of a divisor supported on the exceptional locus. Therefore the condition
for a strong factorizing resolution (in the case $r<n$) or for a log resolution (in the case $r=n$) will be trivially satisfied over $V_j$. 

We next consider the chart $V_0$ in  $\pi_2^{-1}(W_P)$ given by 
$$z_{\ell}=u_{\ell}\quad\text{for}\quad\ell\in \{0,p_1+1,\ldots,n\} \quad\text{and}\quad z_{\ell}=u_0u_{\ell}\quad\text{for}\quad 1\leq\ell\leq p_1.$$
Note that the $\pi_2$-exceptional divisor is defined in this chart by $u_0$. 
Again, an easy computation shows that $I_Z\cdot\cO_{V_0}$ is equal to
$$(u_0^{e_1+1}u_1,\ldots u_0^{e_1+1}u_{p_1},u_0^{e_2}u_{p_1+1},\ldots,u_0^{e_k}u_r)$$
in Case 2 and to
$$(u_0^{e_1+1}u_1,\ldots,u_0^{e_1+1}u_{p_1},\ldots, u_0^{e_{m+1}})$$
in Case 3 (we recall that $m$ is such that $q=p_1+\ldots+p_m$).
We thus see that if we are in Case 2, after performing $(e_2-e_1)$ such blow-ups, we are in the situation where $k$ is replaced by $k-1$:
in the only charts that we need to consider, we have coordinates $v_0,v_1,\ldots,v_{n-1}$, such that the pull-back of $I_Z$ is equal to
$$(v_0^{e_2}v_1,\ldots,v_0^{e_2}v_{p_1+p_2}, v_0^{e_3}v_{p_1+p_2+1},\ldots,v_0^{e_k}v_r).$$
If $k>2$, then the next blow-up is along the ideal $(v_0,v_1,\ldots,v_{p_1+p_2})$, and the process continues as above. 
In the end, we see that in the only charts that we need to consider, 
we have coordinates $w_0,\ldots,w_{n-1}$ such that the pull-back of $I_Z$ is $w_0^{d_r}\cdot (w_1,\ldots,w_r)$. 
Therefore, in such a chart, the condition for having a strong factorizing resolution is satisfied.

Similarly, if we are in Case 3, then after the first $(e_2-e_1)+\ldots+(e_{m+1}-e_{m})$ blow-ups, in the only charts that we need to consider,
we have coordinates $w_0,\ldots,w_{n-1}$ such that the pull-back of $I_Z$ is $(w_0^{e_{m+1}})$. Therefore, in this chart, we only have the ideal of a divisor 
supported on the exceptional locus, so this satisfies the condition for $\pi$ to be a strong factorizing resolution when $r<n$ and to be a log resolution when $r=n$.
This completes the proof of the proposition.
\end{proof}

\begin{rmk}\label{rmk_vanish_b_r}
With the notation in Proposition~\ref{strong_res}, it follows from the definition of $\pi$ that if $r<n$, then starting with $X_1$, at each step we blow up a smooth center that is not contained in
the strict transform of $Z$ on the respective variety. In fact, with the notation in the proof, for every chart $U_i$ on $X_1$ and for every exceptional divisor on $Y$
whose image in $X_1$ intersects $U_i$, we see that $g_r$ does not vanish along this image. 
\end{rmk}

We can now prove the main result of this note.

\begin{proof}[Proof of Theorem~\ref{thm_example}]
For every $k$, with $1\leq k\leq n$, we put
$\alpha_k=k+\frac{n-d_1-\ldots-d_k}{d_k}$.
By Lemma~\ref{lem-eq}, we know that $\min_k\alpha_k=\alpha_p$, where $p$ is as in the statement of the theorem.
Since the inequality
$$\widetilde{\alpha}(Z) \leq \alpha_p$$
follows from Theorem~\ref{thm_upper_bound}, we only need to prove the opposite inequality.

Suppose first that $\sum_{j=1}^rd_j>n$. Note that since $\widetilde{\alpha}(Z)\leq\alpha_n<r$, we know that in this case we have
$\widetilde{\alpha}(Z)={\rm lct}(X,Z)$, and thus only need to show that ${\rm lct}(X,Z)\geq\alpha_p$. For basic facts about log canonical thresholds
(including the definition), we refer to \cite[Chapter~9]{Lazarsfeld}.
As in the proof of Proposition~\ref{strong_res}, we consider the blow-up $\pi_1\colon X_1\to {\mathbf A}^n$ of ${\mathbf A}^n$, with exceptional divisor $E_1$.
Note that $K_{X_1/{\mathbf A}^n}=(n-1)E_1$. We have seen in the proof of Proposition~\ref{strong_res} that we can cover $X_1$ by affine open charts $U_i$,
such that $I_Z\cdot\cO_{U_i}=(x_i^{d_1}g_1,\ldots,x_i^{d_r}g_r)$, where $x_i$ defines $E_1$ in $U_i$, and the divisors defined by $x_i,g_1,\ldots,g_r$ are smooth
and their sum has simple normal crossings. We need to show that if $G$ is a prime divisor on $W$, where $\varphi\colon W\to X_1$ is such that $\pi_1\circ\varphi$ is a 
log resolution of $(X,Z)$,
 with the valuation ${\rm ord}_G$ corresponding to $G$, and if $a_G={\rm ord}_G(I_Z)$
and $k_G$ is the coefficient of $G$ in $K_{W/X}$, then $\tfrac{k_G+1}{a_G}\geq\alpha_p$. 
Suppose that the image of $G$ on $X_1$ intersects the chart $U_i$ and let $b_0={\rm ord}_G(x_i)$ and $b_j={\rm ord}_G(g_j)$ for $1\leq j\leq r$. 
We may and will assume that $b_0>0$: otherwise, since $Z\smallsetminus\{0\}$ is smooth, of codimension $r$ in ${\mathbf A}^n\smallsetminus\{0\}$,
we have ${\rm lct}({\mathbf A}^n\smallsetminus\{0\},Z\smallsetminus\{0\})=r$, and thus $\tfrac{k_G+1}{a_G}\geq r>\alpha_p$.
It is well-known that since the divisor ${\rm div}(x_i)+\sum_{j=1}^r{\rm div}(g_j)$ has simple normal crossings, 
if $k'_G$ is the coefficient of $G$ in $K_{W/X_1}$, then
$$k'_G+1\geq b_0+b_1+\ldots+b_r$$
(see, for example, the proof of \cite[Lemma~9.2.19]{Lazarsfeld}). 
Since 
$$K_{W/X}=K_{W/X_1}+\varphi^*(K_{X_1/{\mathbf A}^n})=K_{W/X_1}+(n-1)\varphi^*(E_1),$$
 we have
$$k_G+1=k'_G+1+(n-1)b_0\geq nb_0+\sum_{j=1}^rb_j.$$
Since 
$${\rm ord}_G(I_Z)=\min\{b_0d_j+b_j\mid 1\leq j\leq r\},$$
it follows that it is enough to show that
\begin{equation}\label{ineq1}
nb_0+\sum_{j=1}^rb_j\geq\alpha_p\cdot\min\{b_0d_j+b_j\mid 1\leq j\leq r\}.
\end{equation}
If we put $u_j=b_j/b_0$ for $1\leq j\leq r$ and $M=\min\{d_j+u_j\mid 1\leq j\leq r\}$, then
(\ref{ineq1}) becomes
\begin{equation}\label{ineq2}
n+\sum_{j=1}^ru_j\geq \alpha_pM.
\end{equation}
We define an increasing sequence $k_1<k_2<\ldots<k_s=r$ such that
$$k_1=\max\{j\mid 1\leq j\leq r, d_j+u_j=M\},$$
and if $k_{\ell}<r$, then
$$k_{\ell+1}=\max\{k>k_{\ell}\mid d_k+b_k=\min\{d_j+u_j\mid j>k_{\ell}\}\big\}.$$
With this notation, the  inequality (\ref{ineq2}) becomes
\begin{equation}\label{ineq3}
\frac{n+u_1+\ldots+u_r}{d_{k_1}+u_{k_1}}\geq\alpha_p.
\end{equation}
 For $1\leq q\leq s$, let us put
\begin{equation}\label{ineq4}
\beta_{k_q}:=\frac{n+k_qu_{k_q}+\sum_{j=1}^{k_q}(d_{k_q}-d_j)+\sum_{j>k_q}u_j}{d_{k_q}+u_{k_q}}.
\end{equation}
For every $j<k_1$, we have $u_j\geq u_{k_1}+(d_{k_1}-d_j)$, hence the left-hand side of (\ref{ineq3}) is
$\geq \beta_{k_1}$ and thus (\ref{ineq3}) follows if we show
\begin{equation}\label{ineq6}
\beta_{k_1}\geq\alpha_p.
\end{equation}
 The key step is to show that if $q<s$, then
\begin{equation}\label{ineq5}
\beta_{k_q}\geq\min\{\alpha_{k_q}, \beta_{k_{q+1}}\}.
\end{equation}
Indeed, if we view $\beta_{k_q}$ as a function of $u_{k_q}$, since $0\leq u_{k_q}\leq u_{k_{q+1}}+(d_{k_{q+1}}-d_{k_q})$,
we see that $\beta_{k_q}$ is bounded below by the minimum taken when $u_{k_q}=0$ and when $u_{k_q}=u_{k_{q+1}}+(d_{k_{q+1}}-d_{k_q})$.
In the former case, the value is 
$$\frac{n+\sum_{j=1}^{k_q}(d_{k_q}-d_j)+\sum_{j>k_q}u_j}{d_{k_q}}\geq \alpha_{k_q},$$
while in the latter case, using the fact that $u_j\geq u_{k_{q+1}}+d_{k_{q+1}}-d_j$ for $k_q<j\leq k_{q+1}$, the value is
$$\frac{n+k_q(u_{k_{q+1}}+d_{k_{q+1}}-d_{k_q})+\sum_{j=1}^{k_q}(d_{k_q}-d_j)+\sum_{j>k_q}u_j}{u_{k_{q+1}}+d_{k_{q+1}}}\geq\beta_{k_{q+1}}.$$
We thus obtain the inequality in (\ref{ineq5}). Using the fact that $\alpha_{k_q}\geq\alpha_p$ for all $q$ gives
$$\beta_{k_1}\geq\min\{\alpha_p,\beta_{k_s}\}.$$
On the other hand, we have $k_s=r$, and thus
$$\beta_{k_s}=\frac{n+ru_r+\sum_{j=1}^r(d_r-d_j)}{d_r+u_r}.$$
As above, if we view this as a function of $u_r$, we see that it is bounded below by the minimum of its values when $u_r=0$ (which is $\alpha_r$) and the
 value of the limit when $u_r$ goes to infinity (which is $r>\alpha_r$). We thus conclude that $\beta_{k_1}\geq\alpha_p$, completing the proof of (\ref{ineq6}),
 and thus the proof of the theorem when $\sum_{j=1}^rd_j>n$. 
 
 Suppose now that $\sum_{j=1}^rd_j\leq n$. Note that since we assume $d_j\geq 2$ for all $j$, we have $r<n$. 
By Theorem~\ref{thm_lower_bound}, in order to show
 that $\widetilde{\alpha}(Z)\geq\alpha_r$, it is enough to show that if $G$ is a prime $\pi$-exceptional divisor on $Y$, then we have $\tfrac{k_G+1}{a_G}\geq\alpha_r$
 (note that we keep the notation in the first part of the proof). We choose again a chart $U_i$ on $X_1$ that intersects the image of $G$ and put
 $b_0={\rm ord}_G(x_i)$ and $b_j={\rm ord}_G(g_j)$ for $1\leq j\leq r$. As before, it is enough to show that
 \begin{equation}\label{ineq8}
 \tfrac{nb_0+b_1+\ldots+b_r}{a_G}\geq \alpha_r,
 \end{equation}
 where $a_G=\min\{b_0d_j+b_j\mid 1\leq j\leq r\}$.
 A key point is that, by construction, we have $b_r=0$ (see Remark~\ref{rmk_vanish_b_r}). This implies that 
 \begin{equation}\label{ineq9}
 a_G\leq b_0d_r.
 \end{equation}
 On the other hand, since $b_j\geq a_G-b_0d_j$ for $j\geq 1$, we have
 $$ \tfrac{nb_0+b_1+\ldots+b_r}{a_G}\geq \frac{nb_0+\sum_{j=1}^r(a_G-b_0d_j)}{a_G}=r+\tfrac{(n-d_1-\ldots-d_r)b_0}{a_G}\geq \alpha_r,$$
 where the last inequality follows from (\ref{ineq9}), using the fact that $n\geq\sum_{j=1}^rd_j$. This proves (\ref{ineq8}) and completes the proof of the theorem.
\end{proof}

\begin{rmk}
In the statement of Theorem~\ref{thm_example}, we made the assumption that $d_1\geq 2$. The general case can be easily reduced to this one: indeed, suppose that 
$d_q=1<d_{q+1}$ for some $q\leq r-1$. In this case, $Z$ is isomorphic to a closed subscheme $W$ of ${\mathbf A}^{n-q}$ defined by homogeneous equations of degrees
$d_{q+1}\leq\ldots\leq d_r$, and which satisfies the hypothesis in Theorem~\ref{thm_example}. 
Moreover, by \cite[Proposition~4.14]{CDMO}, we have $\widetilde{\alpha}(Z)=\widetilde{\alpha}(W)+q$. 
\end{rmk}

\section*{References}
\begin{biblist}

\bib{AGZV}{book}{
   author={Arnol\cprime d, V. I.},
   author={Guse\u{\i}n-Zade, S. M.},
   author={Varchenko, A. N.},
   title={Singularities of differentiable maps. Vol. II},
   series={Monographs in Mathematics},
   volume={83},
   note={Monodromy and asymptotics of integrals;
   Translated from the Russian by Hugh Porteous;
   Translation revised by the authors and James Montaldi},
   publisher={Birkh\"{a}user Boston, Inc., Boston, MA},
   date={1988},
   pages={viii+492},
 }

\bib{BV}{article}{
   author={Bravo, A.},
   author={Villamayor U., O.},
   title={A strengthening of resolution of singularities in characteristic
   zero},
   journal={Proc. London Math. Soc. (3)},
   volume={86},
   date={2003},
   number={2},
   pages={327--357},
}

\bib{BMS}{article}{
   author={Budur, N.},
   author={Musta\c{t}\u{a}, M.},
   author={Saito, M.},
   title={Bernstein-Sato polynomials of arbitrary varieties},
   journal={Compos. Math.},
   volume={142},
   date={2006},
   number={3},
   pages={779--797},
}

\bib{CDM}{article}{
author={Chen, Q.}, 
author={Dirks, B.},
author={Musta\c{t}\u{a}, M.},
title={The minimal exponent and k-rationality for local complete intersections},
journal={preprint arXiv:2212.01898, to appear in J. \'{E}c. polytech. Math.},
year={2022},
}

\bib{CDMO}{article}{
   author={Chen, Q.},
   author={Dirks, B.},
   author={Musta\c t\u a, M.},
   author={Olano, S.},
   title={$V$-filtrations and minimal exponents for local complete
   intersections},
   journal={J. Reine Angew. Math.},
   volume={811},
   date={2024},
   pages={219--256},
}

\bib{Dirks}{article}{
author={Dirks, B.},
title={Some applications of microlocalization for local complete intersection subvarieties},
journal={preprint arXiv:2310.15277},
date={2023},
}

\bib{DM}{article}{
   author={Dirks, B.},
   author={Musta\c{t}\u{a}, M.},
   title={Upper Bounds for Roots of $b$-Functions, following Kashiwara and
   Lichtin},
   journal={Publ. Res. Inst. Math. Sci.},
   volume={58},
   date={2022},
   number={4},
   pages={693--712},

}

\bib{EhlersLo}{article}{
   author={Ehlers, F.},
   author={Lo, K.~C.},
   title={Minimal characteristic exponent of the Gauss-Manin connection of
   isolated singular point and Newton polyhedron},
   journal={Math. Ann.},
   volume={259},
   date={1982},
   number={4},
   pages={431--441},
}

\bib{FL}{article}{
author={Friedman, R.},
author={Laza, R.},
title={Higher Du Bois and higher rational singularities, with an appendix by Morihiko Saito},
journal={preprint arXiv:2205.04729, to appear in Duke Math. J.},
date={2022},
}

\bib{Saito_et_al}{article}{
   author={Jung, S.-J.},
   author={Kim, I.-K.},
   author={Saito, M.},
   author={Yoon, Y.},
   title={Higher Du Bois singularities of hypersurfaces},
   journal={Proc. Lond. Math. Soc. (3)},
   volume={125},
   date={2022},
   number={3},
   pages={543--567},
   }

\bib{Kashiwara}{article}{
   author={Kashiwara, M.},
   title={$B$-functions and holonomic systems. Rationality of roots of
   $B$-functions},
   journal={Invent. Math.},
   volume={38},
   date={1976/77},
   number={1},
   pages={33--53},
}

\bib{Kollar}{article}{
   author={Koll\'ar, J.},
   title={Singularities of pairs},
   conference={
      title={Algebraic geometry---Santa Cruz 1995},
   },
   book={
      series={Proc. Sympos. Pure Math.},
      volume={62},
      publisher={Amer. Math. Soc., Providence, RI},
   },
   date={1997},
   pages={221--287},
}

\bib{Lazarsfeld}{book}{
       author={Lazarsfeld, Robert},
       title={Positivity in algebraic geometry II},  
       series={Ergebnisse der Mathematik und ihrer Grenzgebiete},  
       volume={49},
       publisher={Springer-Verlag, Berlin},
       date={2004},
}     

\bib{Malgrange}{article}{
   author={Malgrange, B.},
   title={Int\'{e}grales asymptotiques et monodromie},
   journal={Ann. Sci. \'{E}cole Norm. Sup. (4)},
   volume={7},
   date={1974},
   pages={405--430 (1975)},
}

      \bib{Malgrange2}{article}{
   author={Malgrange, B.},
   title={Le polyn{o}me de Bernstein d'une singularit\'{e} isol\'{e}e},
   conference={
      title={Fourier integral operators and partial differential equations},
      address={Colloq. Internat., Univ. Nice, Nice},
      date={1974},
   },
   book={
      publisher={Springer, Berlin},
   },
   date={1975},
   pages={98--119. Lecture Notes in Math., Vol. 459},
}

\bib{MOPW}{article}{
   author={Musta\c{t}\u{a}, M.},
   author={Olano, S.},
   author={Popa, M.},
   author={Witaszek, J.},
   title={The Du Bois complex of a hypersurface and the minimal exponent},
   journal={Duke Math. J.},
   volume={172},
   date={2023},
   number={7},
   pages={1411--1436},
}

\bib{MP}{article}{
   author={Musta\c{t}\u{a}, M.},
   author={Popa, M.},
   title={Hodge ideals for ${\mathbf Q}$-divisors, $V$-filtration, and minimal
   exponent},
   journal={Forum Math. Sigma},
   volume={8},
   date={2020},
   pages={Paper No. e19, 41},
}

\bib{MP3}{article}{
author={Musta\c{t}\u{a}, M.},
author={Popa, M.},
title={Hodge filtration on local cohomology, Du Bois complex, and local cohomological dimension},
journal={ Forum of Math. Pi},
volume={10},
date={2022},
pages={Paper No. e22, 58pp},
}

\bib{MP2}{article}{
author={Musta\c{t}\u{a}, M.},
author={Popa, M.},
title={On $k$-rational and $k$-Du Bois local complete intersections},
journal={preprint arXiv:2207.08743, to appear in Algebr. Geom.},
date={2022},
}

\bib{Saito-exponents}{article}{
   author={Saito, M.},
   title={Exponents and Newton polyhedra of isolated hypersurface
   singularities},
   journal={Math. Ann.},
   volume={281},
   date={1988},
   number={3},
   pages={411--417},
}

\bib{Saito-B}{article}{
   author={Saito, M.},
   title={On $b$-function, spectrum and rational singularity},
   journal={Math. Ann.},
   volume={295},
   date={1993},
   number={1},
   pages={51--74},
}

\bib{Saito-MLCT}{article}{
      author={Saito, M.},
	title={Hodge ideals and microlocal $V$-filtration},
	journal={preprint arXiv:1612.08667}, 
	date={2016}, 
}

\bib{Varchenko}{article}{
   author={Var\v{c}enko, A. N.},
   title={Asymptotic Hodge structure on vanishing cohomology},
   language={Russian},
   journal={Izv. Akad. Nauk SSSR Ser. Mat.},
   volume={45},
   date={1981},
   number={3},
   pages={540--591},
}

\end{biblist}

\end{document}